\theoremstyle{plain}
\newtheorem{thm}{\protect\theoremname}
\theoremstyle{remark}
\newtheorem{rem}[thm]{\protect\remarkname}
\theoremstyle{definition}
\newtheorem{defn}[thm]{\protect\definitionname}
\theoremstyle{plain}
\newtheorem{lem}[thm]{\protect\lemmaname}
\theoremstyle{plain}
\newtheorem{prop}[thm]{\protect\propositionname}
\theoremstyle{plain}
\newtheorem{cor}[thm]{\protect\corollaryname}
\providecommand{\corollaryname}{Corollary}
  \providecommand{\definitionname}{Definition}
  \providecommand{\lemmaname}{Lemma}
  \providecommand{\propositionname}{Proposition}
  \providecommand{\remarkname}{Remark}
\providecommand{\theoremname}{Theorem}
\newcommand{\IP}{\mathbb{P}}
\providecommand{\corollaryname}{Corollary}
  \providecommand{\lemmaname}{Lemma}
  \providecommand{\propositionname}{Proposition}
  \providecommand{\remarkname}{Remark}
\providecommand{\theoremname}{Theorem}
\providecommand{\corollaryname}{Corollary}
  \providecommand{\lemmaname}{Lemma}
  \providecommand{\propositionname}{Proposition}
  \providecommand{\remarkname}{Remark}
\providecommand{\theoremname}{Theorem}
\providecommand{\corollaryname}{Corollary}
\providecommand{\definitionname}{Definition}
\providecommand{\lemmaname}{Lemma}
\providecommand{\propositionname}{Proposition}
\providecommand{\remarkname}{Remark}
\providecommand{\theoremname}{Theorem}
\begin{document}
\title{Two Groups in a Curie-Weiss Model}
\author{Werner Kirsch and Gabor Toth\\
 Fakult\"at f\"ur Mathematik und Informatik \\
 FernUniversit\"at Hagen, Germany}
\maketitle

\section{Introduction}

The Curie-Weiss model is probably the easiest model of magnetism which
shows a phase transition between a diamagnetic and a ferromagnetic
phase. In this model the spins can take values in $\{-1,1\}$ (or
up/down), each spin interacts with all the others in the same way.
More precisely, for finitely many spins $(X_{1},X_{2},\ldots,X_{N})\in\{-1,1\}$
the energy of the spins is given by 
\begin{align}
H~=~H(X_{1},\ldots,X_{N})~:=~-\frac{J}{2N}\,\big(\sum_{j=1}^{N}\,X_{j}\big)^{2}\,,
\end{align}
where $J$ is a positive real number.

Consequently, in the `canonical ensemble' with inverse temperature
$\beta\geq0$ the probability of a spin configuration is given by
\begin{align}
\IP\big(X_{1}=x_{1},\ldots,X_{N}=x_{N}\big)~:=~Z^{-1}\;e^{-\beta H(x_{1},\ldots,x_{N})}\label{eq:Pbeta}
\end{align}
where $x_{i}\in\{-1,1\}$ and $Z$ is a normalization constant which
depends on $N$, $J$ and $\beta$. Since only the product of $\beta$
and $J$ occurs in \eqref{eq:Pbeta} we may set $J=1$ without loss
of generality.

The quantity 
\begin{align}
S_{N}~=~\sum_{j=1}^{N}X_{j}
\end{align}
is called the (total) magnetization. It is well known (see e.\,g.
Ellis \cite{Ellis} or \cite{key-1}) that the Curie-Weiss model has
a phase transition at $\beta=1$ in the following sense 
\begin{align}
\frac{1}{N}\,S_{N}~\Longrightarrow~\frac{1}{2}\,(\delta_{-m(\beta)}+\delta_{m(\beta)})\label{eq:lln}
\end{align}
where $\Rightarrow$ denotes convergence in distribution, $\delta_{x}$
the Dirac measure in $x$.

For $\beta\leq1$ we have $m(\beta)=0$ which is the unique solution
of 
\begin{align}
\tanh(\beta x)=x\label{eq:mbeta}
\end{align}
for this case.

If $\beta>1$ equation \eqref{eq:mbeta} has exactly three solutions
and $m(\beta)$ is the unique positive one.

Equation \eqref{eq:lln} is a substitute for the law of large numbers
for i.i.d. random variables.

Moreover, for $\beta<1$ there is a central limit theorem, i.\,e.
\begin{align}
\frac{1}{\sqrt{N}}\,S_{N}~\Longrightarrow~\mathcal{N}(0,\frac{1}{1-\beta})
\end{align}

For $\beta=1$ there is no such central limit theorem. In fact, the
random variables 
\begin{align}
\frac{1}{N^{3/4}}\,S_{N}
\end{align}
converge in distribution to a limit which is not a normal distribution.

In this paper we form out of $N$ Curie-Weiss spins two disjoint groups
$X_{1},\ldots,X_{N_{1}}$ and $Y_{1},\ldots,Y_{N_{2}}$ with $N_{1}+N_{2}\leq N$.
We let $N_{1}$ and $N_{2}$ depend on $N$ in such a way that both
$N_{1}$ and $N_{2}$ go to infinity as $N$ does. We consider the
asymptotic behaviour of the two-dimensional random variables 
\begin{equation}
\big(\sum_{i=1}^{N_{1}}\,X_{i}\,,\,\sum_{j=1}^{N_{2}}\,Y_{j}\,\big)
\end{equation}
as $N$ goes to infinity.

We prove 
\begin{thm}[Law of Large Numbers]
\label{LLN_hom} If $N_{1},N_{2}\to\infty$ as $N\to\infty$, then
we have for all $\beta$ 
\begin{equation}
\big(\frac{1}{N_{1}}\sum_{i=1}^{N_{1}}X_{i},\frac{1}{N_{2}}\sum_{j=1}^{N_{2}}Y_{j}\big)~\underset{N\to\infty}{\Longrightarrow}~\frac{1}{2}\big(\delta_{(-m(\beta),-m(\beta))}+\delta_{(m(\beta),m(\beta))}\big).\label{eq:LLNhom}
\end{equation}
\end{thm}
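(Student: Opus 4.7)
The key observation is that the Curie-Weiss Hamiltonian $H$ depends on the configuration only through $S_N = \sum_{j=1}^{N} X_j$. Therefore, conditionally on $S_N$, the joint law of $(X_1,\ldots,X_N)$ is uniform on the subset of $\{-1,1\}^N$ with that prescribed sum---an object which is $\beta$-free. My plan is to use this to show that each of the two sub-averages tracks $S_N/N$ closely, and then combine with the one-dimensional limit \eqref{eq:lln} via Slutsky's theorem.

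Under this conditional uniform measure, elementary hypergeometric sampling gives, for $i \neq j$,
\begin{align*}
\mathbb{E}[X_i \mid S_N] \;=\; \frac{S_N}{N}, \qquad \operatorname{Cov}(X_i, X_j \mid S_N) \;=\; -\,\frac{1-(S_N/N)^{2}}{N-1};
\end{align*}
by the exchangeability of the Curie-Weiss measure, any disjoint family of spins plays the same role, so these formulas also apply to the $Y_j$ and to all cross-covariances. A short calculation then yields
\begin{align*}
\operatorname{Var}\Bigl(\tfrac{1}{N_k}\sum_i Z_i^{(k)} \,\Big|\, S_N\Bigr) \;=\; \frac{(1-(S_N/N)^{2})(N-N_k)}{N_k(N-1)} \;\leq\; \frac{1}{N_k},
\end{align*}
where $Z^{(1)} = X$ and $Z^{(2)} = Y$. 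Applying Chebyshev's inequality conditionally on $S_N$ and then integrating,
\begin{align*}
\IP\Bigl(\Bigl|\tfrac{1}{N_k}\sum_i Z_i^{(k)} - \tfrac{S_N}{N}\Bigr| > \varepsilon\Bigr) \;\leq\; \frac{1}{N_k \varepsilon^{2}} \;\longrightarrow\; 0,
\end{align*}
since $N_1,N_2\to\infty$. Hence the vector of centred sub-averages $\bigl(\tfrac{1}{N_1}\sum X_i - \tfrac{S_N}{N},\,\tfrac{1}{N_2}\sum Y_j - \tfrac{S_N}{N}\bigr)$ converges to $(0,0)$ in probability.

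By \eqref{eq:lln} the diagonal vector $(S_N/N, S_N/N)$ converges in distribution to $\tfrac{1}{2}\bigl(\delta_{(-m(\beta),-m(\beta))}+\delta_{(m(\beta),m(\beta))}\bigr)$, and the two-dimensional Slutsky theorem (distributional limit plus an in-probability vanishing perturbation) finishes the proof. The only nontrivial step is the conditional variance bound; everything else is routine. A pleasant feature of this route is that no case distinction between $\beta<1$, $\beta=1$, and $\beta>1$ is required: all $\beta$-dependence has been routed through $S_N/N$ and is inherited from the known one-dimensional result.
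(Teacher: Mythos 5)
Your proof is correct, but it takes a genuinely different route from the paper. The paper proves the theorem by the moment method: it expands $\mathbb{E}\bigl[\bigl(\tfrac{1}{N_{1}}\sum X_{i}\bigr)^{K}\bigl(\tfrac{1}{N_{2}}\sum Y_{j}\bigr)^{L}\bigr]$ over multiindices, shows via the profile-vector combinatorics that only multiindices with all indices distinct survive the normalization $N_{1}^{-K}N_{2}^{-L}$, and then invokes the known asymptotics of the pure correlations $\mathbb{E}(X_{i_{1}}\cdots X_{i_{K}}Y_{j_{1}}\cdots Y_{j_{L}})$ from \cite{key-1}, which tend to $0$ for $\beta\leq1$ and to $m(\beta)^{K+L}$ for $\beta>1$; these are exactly the mixed moments of $\tfrac{1}{2}(\delta_{(-m,-m)}+\delta_{(m,m)})$. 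You instead exploit the fact that the Gibbs weight depends on the configuration only through $S_{N}$, so that conditionally on $S_{N}$ the spins are an exchangeable sample without replacement; your hypergeometric covariance $-\,(1-(S_{N}/N)^{2})/(N-1)$ and the resulting conditional variance bound $1/N_{k}$ are correct, and conditional Chebyshev plus the two-dimensional Slutsky theorem then reduce everything to the one-dimensional limit \eqref{eq:lln}. What your route buys is genuine: it is more elementary, needs no case distinction in $\beta$, and makes transparent the mechanism behind the theorem (and behind the Remark following it) --- both group averages are slaved to the single global order parameter $S_{N}/N$, which is why the limit charges only the diagonal atoms $(\pm m,\pm m)$ with equal signs. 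What the paper's route buys is that the same multiindex/profile-vector machinery is reused verbatim for the central limit theorem and the $\beta=1$ analysis, where your conditioning argument would no longer suffice (there one needs the precise second-order correlation structure, not just concentration around $S_{N}/N$). Both proofs import the one-dimensional input as a black box: you cite \eqref{eq:lln} directly, the paper cites the correlation asymptotics from \cite{key-1}, and these are essentially equivalent pieces of information.
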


Above `$\Longrightarrow$' denotes convergence in distribution of
the $2$-dimensional random variable on the left hand side. 
\begin{rem}
If we consider a model without interaction between the groups $X_{i}$
and $Y_{j}$ then the limit in \eqref{eq:LLNhom} is 
\[
\frac{1}{4}\big(\delta_{(-m(\beta),-m(\beta))}+\delta_{(-m(\beta),m(\beta))}+\delta_{(m(\beta),-m(\beta))}+\delta_{(m(\beta),m(\beta))}\big)
\]
\end{rem}

For $\beta<1$ we also have a central limit theorem. The covariance
of the limiting normal distribution depends on the growth rate of
$N_{1}$ and $N_{2}$. We set 
\begin{equation}
\alpha_{1}=\lim\frac{N_{1}}{N}\qquad\qquad\alpha_{2}=\lim_{N\to\infty}\frac{N_{2}}{N}
\end{equation}
and assume that these limits exist. 
\begin{thm}[Central Limit Theorem]
\label{CLT_hom} If $\beta<1$, then 
\begin{equation}
(\frac{1}{\sqrt{N_{1}}}\sum_{i=1}^{N_{1}}X_{i},\frac{1}{\sqrt{N_{2}}}\sum_{j=1}^{N_{2}}Y_{j})~\underset{N\to\infty}{\Longrightarrow}~\mathcal{N}\big((0,0),C),\label{eq:CLT}
\end{equation}
where the covariance matrix $C$ is given by 
\begin{align}
C & =\left[\begin{array}{cc}
1+\alpha_{1}\frac{\beta}{1-\beta} & \sqrt{\alpha_{1}\alpha_{2}}\frac{\beta}{1-\beta}\\
\sqrt{\alpha_{1}\alpha_{2}}\frac{\beta}{1-\beta} & 1+\alpha_{2}\frac{\beta}{1-\beta}
\end{array}\right]
\end{align}
\end{thm}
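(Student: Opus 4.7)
The plan is to show pointwise convergence of the joint characteristic function
\[
\varphi_N(s,t)\;:=\;\mathbb{E}\Bigl[\exp\Bigl(\tfrac{is}{\sqrt{N_1}}\sum_{i=1}^{N_1}X_i+\tfrac{it}{\sqrt{N_2}}\sum_{j=1}^{N_2}Y_j\Bigr)\Bigr]
\]
to $\exp\bigl(-\tfrac{1}{2}(s,t)\,C\,(s,t)^{\top}\bigr)$ and then to invoke Lévy's continuity theorem. The central tool is the Hubbard--Stratonovich transformation: the identity $e^{a^{2}/2}=\int(2\pi)^{-1/2}e^{-u^{2}/2+au}\,du$ applied with $a=\sqrt{\beta/N}\,S_{N}$ decouples the spins conditionally on an auxiliary Gaussian variable $u$. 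Inserting this into the Curie--Weiss weight and factorising the resulting sum over $\{-1,+1\}^{N}$ into the three groups of sizes $N_{1}$, $N_{2}$ and $N-N_{1}-N_{2}$ yields
\[
\varphi_N(s,t)\;=\;\frac{2^{N}}{Z}\int\frac{du}{\sqrt{2\pi}}\,e^{-u^{2}/2}\,c_{N_1}(u,s)^{N_1}\,c_{N_2}(u,t)^{N_2}\,\bigl[\cosh\bigl(u\sqrt{\beta/N}\bigr)\bigr]^{N-N_1-N_2},
\]
where $c_{M}(u,v):=\cosh\bigl(u\sqrt{\beta/N}+iv/\sqrt{M}\bigr)$, and where $Z\cdot 2^{-N}$ equals the same integral with $s=t=0$.

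Next I would carry out a Laplace-type analysis around $u=0$, which is the unique dominant saddle because $m(\beta)=0$ for $\beta<1$. Applying $\log\cosh(z)=z^{2}/2+O(z^{4})$ at $z=u\sqrt{\beta/N}+iv/\sqrt{N_{k}}$ (of magnitude $O(N^{-1/2})$ on compact sets of $(u,v)$), and using $N_{k}/N\to\alpha_{k}$, gives in the limit
\[
N_{k}\log c_{N_{k}}(u,v)\;\longrightarrow\;\tfrac{\alpha_{k}\beta}{2}u^{2}\;-\;\tfrac{v^{2}}{2}\;+\;iuv\sqrt{\alpha_{k}\beta}.
\]
Adding the three groups' contributions (the spectator group contributing $(1-\alpha_1-\alpha_2)\beta u^2/2$) together with $-u^{2}/2$ from the Gaussian weight, the exponent of the integrand converges to $-\tfrac{1-\beta}{2}u^{2}+iu\bigl(s\sqrt{\alpha_{1}\beta}+t\sqrt{\alpha_{2}\beta}\bigr)-\tfrac{s^{2}+t^{2}}{2}$. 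Completing the square and integrating over $u$ yields $(1-\beta)^{-1/2}\exp\bigl\{-\tfrac{1}{2}[s^{2}+t^{2}+(s\sqrt{\alpha_{1}\beta}+t\sqrt{\alpha_{2}\beta})^{2}/(1-\beta)]\bigr\}$. The same computation with $s=t=0$ shows $Z\cdot 2^{-N}\to(1-\beta)^{-1/2}$, so the two prefactors cancel; identifying the bracketed quadratic form in $(s,t)$ with $(s,t)\,C\,(s,t)^{\top}$ recovers exactly the covariance matrix in the statement, since $\alpha_{1}\beta s^{2}+2\sqrt{\alpha_{1}\alpha_{2}}\beta st+\alpha_{2}\beta t^{2}=(s\sqrt{\alpha_{1}\beta}+t\sqrt{\alpha_{2}\beta})^{2}$.

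The main obstacle is the rigorous justification of passage to the limit inside the $u$-integral: pointwise Taylor estimates must be upgraded to dominated convergence. The elementary inequalities $|\cosh(a+ib)|\le\cosh(a)$ and $\log\cosh(a)\le a^{2}/2$ give, after combining all factors, the uniform bound
\[
\bigl|e^{-u^{2}/2}\,c_{N_{1}}(u,s)^{N_{1}}c_{N_{2}}(u,t)^{N_{2}}[\cosh(u\sqrt{\beta/N})]^{N-N_{1}-N_{2}}\bigr|\;\le\;e^{-(1-\beta)u^{2}/2},
\]
which is integrable precisely because $\beta<1$. This dominates both the numerator and the normaliser $Z\cdot 2^{-N}$, validates the Laplace expansion on compact $(s,t)$-sets, and Lévy's continuity theorem then yields \eqref{eq:CLT}.
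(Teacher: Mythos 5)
Your proof is correct, but it takes a genuinely different route from the paper. The paper proves Theorem \ref{CLT_hom} by the method of moments: it expands $\mathbb{E}\bigl[(N_1^{-1/2}\sum X_i)^K(N_2^{-1/2}\sum Y_j)^L\bigr]$ over multiindices, shows via the profile-vector combinatorics that only indices occurring once or twice survive, arrives at an explicit double sum for the limiting moments $M_{K,L}$, and then verifies that these satisfy the same recursion that Isserlis's theorem forces on the moments of $\mathcal{N}((0,0),C)$. You instead decouple the spins with the Hubbard--Stratonovich identity, reduce the joint characteristic function to a one-dimensional Laplace-type integral over the auxiliary field $u$, expand $\log\cosh$ around the unique minimiser $u=0$ (which is where $\beta<1$ enters), and close the argument with dominated convergence (the bound $|\cosh(a+ib)|\le\cosh a$ together with $\log\cosh a\le a^2/2$ gives the integrable envelope $e^{-(1-\beta)u^2/2}$ for numerator and normaliser alike) and L\'evy's continuity theorem. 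I checked the algebra: the limiting exponent, the Gaussian integration over $u$, the cancellation of the $(1-\beta)^{-1/2}$ prefactors, and the identification of the quadratic form with $(s,t)C(s,t)^{\top}$ are all right, and the argument also covers $\alpha_1=0$ or $\alpha_2=0$ without modification since only $N_1,N_2\to\infty$ is used. Your approach is shorter, avoids the moment-growth condition needed to conclude convergence in distribution from convergence of moments, and treats the linear and sublinear growth regimes uniformly; what the paper's moment method buys in exchange is a uniform combinatorial framework that extends to the critical case $\beta=1$, where the limit is non-Gaussian and a characteristic-function saddle-point analysis would require a genuinely different (quartic) expansion. One small point worth making explicit if you write this up: for fixed $u$ the argument of $\cosh$ tends to $0$, so the principal branch of $\log\cosh$ is well defined for large $N$ and the pointwise limit you take inside the integral is legitimate.
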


In particular, for sublinear growth of either $N_{1}$ or $N_{2}$,
i.\,e. if $\alpha_{1}=0$ or $\alpha_{2}=0$, the standardized sums
in \eqref{eq:CLT} are asymptotically independent.

We mention that the Curie-Weiss model is also used to model the behaviour
of voters who have the choice to vote `Yea' (spin=1, say) or `Nay'
(spin=-1) (see \cite{key-2}).

In the proof of both results we employ the moment method (see e.\,g.
\cite{Breiman} or \cite{key-1}). Thus, to show the convergence in
distribution of a sequence $(X_{n},Y_{n})$ of two-dimensional random
variables to a measure $\mu$ on $\mathbb{R}^{2}$ we prove that 
\begin{align}
\mathbb{E}\Big(X_{n}^{K}\cdot Y_{n}^{L}\Big)~\longrightarrow~\int x^{K}y^{L}\,\mu(dx,dy)\label{eq:momcon}
\end{align}
for all $K,L\in\mathbb{N}$.

Equation \eqref{eq:momcon} implies convergence in distribution if
the moments of $\mu$ grow only moderately, namely if for some constant
$A$ and $C$ and all $K,L$ 
\begin{equation}
\int|x|^{K}|y|^{L}\,\mu(dx,dy)~\leq~A\,C^{K+L}\,(K+L)!
\end{equation}
holds.

Some time after publishing the first version of this paper on arXiv,
we became aware of the articles \cite{FM} and \cite{FC} which contain
the above results as special cases. The methods used by those authors
is very different from ours. We are grateful to Francesca Collet for
drawing our attention to the papers \cite{FM} and \cite{FC}.

\section{Preparation}

To use the method of moments we have to evaluate sums of the form
\begin{align}
 & \mathbb{E}\left[\left(\sum_{i=1}^{N_{1}}X_{i}\right)^{K}\left(\sum_{j=1}^{N_{2}}Y_{j}\right)^{L}\right]\nonumber \\
=~ & \sum_{i_{1},\ldots,i_{K}}\sum_{j_{1},\ldots,j_{L}}\,\mathbb{E}\Big(X_{i_{1}}\cdot X_{i_{2}}\cdot\ldots\cdot X_{i_{K}}\cdot Y_{j_{1}}\cdot Y_{j_{2}}\cdot\ldots\cdot Y_{j_{L}}\Big)\,.
\end{align}
To do the book-keeping for these huge sums we introduce a few combinatorial
concepts taken from \cite{key-1}.
\begin{defn}
We define a multiindex $\underline{i}=(i_{1},i_{2},\ldots,i_{L})\in\{1,2,\ldots,N\}^{L}$. 
\begin{enumerate}
\item For $j\in\{1,2,\ldots,N\}$ we set\\
 
\[
\nu_{j}(\underline{i})=|\{k\in\{1,2,\ldots,L\}|i_{k}=j\}|,
\]
where $|M|$ denotes the number of elements in the set $M$. 
\item For $l=0,1,\ldots,L$ we define\\
 
\[
\rho_{l}(\underline{i})=|\{j|\nu_{j}(\underline{i})=l\}|
\]
and\\
 
\[
\underline{\rho}(\underline{i})=(\rho_{1}(\underline{i}),\ldots,\rho_{L}(\underline{i})).
\]
\end{enumerate}
\end{defn}

The numbers $\nu_{j}(\underline{i})$ represent the multiplicity of
each index $j\in\{1,2,\ldots,N\}$ in the multiindex $\underline{i}$,
and $\rho_{l}(\underline{i})$ represents the number of indices in
$\underline{i}$ that occur exactly $l$ times. We shall call such
$\underline{\rho}(\underline{i})$ profile vectors. 
\begin{lem}[Lemma 3.8]
\label{lem:sum-l-rho}For all $\underline{i}=(i_{1},i_{2},\ldots,i_{L})\in\{1,2,\ldots,N\}^{L}$
we have\\
 $\sum_{l=1}^{L}l\rho_{l}(\underline{i})=L$. 
\end{lem}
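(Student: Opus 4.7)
The identity is a simple double-counting statement: both sides count the number of entries in the multiindex $\underline{i}$. My plan is to interpret each side as a way of tallying the total length of $\underline{i}$, which is $L$ by definition, and to exhibit a bijection (or equivalently a partition argument) between the two counting schemes.

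First I would recall the obvious identity $\sum_{j=1}^{N}\nu_{j}(\underline{i})=L$. This holds because each position $k\in\{1,\ldots,L\}$ of the multiindex contributes exactly $1$ to $\nu_{i_{k}}(\underline{i})$ and nothing to the other $\nu_{j}$'s; summing over all positions gives $L$ on the left, while the right-hand side accumulates one unit per position.

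Next I would group the indices $j\in\{1,\ldots,N\}$ according to the value of $\nu_{j}(\underline{i})$. The sets $\{j:\nu_{j}(\underline{i})=l\}$ for $l=0,1,\ldots,L$ partition $\{1,\ldots,N\}$, and the size of the $l$-th block is by definition $\rho_{l}(\underline{i})$. Restricting to $l\geq 1$ (since $l=0$ contributes nothing to $\sum_{j}\nu_{j}$) yields
\begin{align*}
\sum_{j=1}^{N}\nu_{j}(\underline{i}) \;=\; \sum_{l=1}^{L}\sum_{\substack{j:\nu_{j}(\underline{i})=l}}\nu_{j}(\underline{i}) \;=\; \sum_{l=1}^{L} l\,\rho_{l}(\underline{i}),
\end{align*}
where in the last step I use that every $j$ in the inner sum satisfies $\nu_{j}(\underline{i})=l$, so the inner sum equals $l\cdot\rho_{l}(\underline{i})$. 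Combining this with the identity from the previous paragraph gives $\sum_{l=1}^{L}l\,\rho_{l}(\underline{i})=L$, as required.

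There is no real obstacle here; the only thing to be careful about is correctly handling the indices $j$ with $\nu_{j}(\underline{i})=0$, which are precisely the elements of $\{1,\ldots,N\}$ that do not appear in $\underline{i}$ and hence may safely be excluded from the sum without altering its value.
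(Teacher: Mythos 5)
Your proof is correct and complete: the double-counting identity $\sum_{j=1}^{N}\nu_{j}(\underline{i})=L$ followed by grouping the indices $j$ according to the common value of $\nu_{j}(\underline{i})$ is exactly the right argument, and you handle the $l=0$ block correctly. The paper itself states this lemma without proof (citing it from a survey), so there is nothing to compare against; your argument is the natural one and fills that gap adequately.
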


\begin{defn}
Let $\underline{r}=(r_{1},\ldots,r_{L})$, $\sum_{l=1}^{L}lr_{l}=L$,
be a profile vector. We define

\[
w_{L}(\underline{r})=|\{\underline{i}\in\{1\ldots,N\}^{L}|\underline{\rho}(\underline{i})=\underline{r}\}|
\]
to represent the number of multiindices $\underline{i}$ that have
a given profile vector $\underline{r}$. 
\end{defn}

We now define the set of all profile vectors for a given $L\in\mathbb{N}$. 
\begin{defn}
Let $\Pi^{(L)}=\{\underline{r}\in\{0,1,\ldots,L\}^{L}|\sum_{l=1}^{L}lr_{l}=L\}$.
Some important subsets of $\Pi^{(L)}$ are $\Pi_{k}^{(L)}=\{\underline{r}\in\Pi^{(L)}|r_{1}=k\}$,
$\Pi^{0(L)}=\{\underline{r}\in\Pi^{(L)}|r_{l}=0\text{ for all }l\geq3\}$
and $\Pi^{+(L)}=\{\underline{r}\in\Pi^{(L)}|r_{l}>0\text{ for some }l\geq3\}$. 
\end{defn}

\begin{prop}
\label{thm:comb-coeff-multiindex}For $\underline{r}\in\Pi^{(L)}$
set $r_{0}=N-\sum_{l=1}^{L}r_{l}$, then\\
 
\[
w_{L}(\underline{r})=\frac{N!}{r_{1}!r_{2}!\ldots r_{L}!r_{0}!}\frac{L!}{1!^{r_{1}}2!^{r_{2}}\cdots L!^{r_{L}}}.
\]
\end{prop}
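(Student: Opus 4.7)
I would count the multiindices $\underline{i}\in\{1,\ldots,N\}^L$ with profile vector $\underline{r}$ by breaking the counting into two independent stages.

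\textbf{Stage 1: choose which elements of $\{1,\ldots,N\}$ appear with each multiplicity.} Given $\underline{r}$, we must pick, from the $N$ available index values, a set $A_l\subseteq\{1,\ldots,N\}$ with $|A_l|=r_l$ of values that will appear exactly $l$ times in $\underline{i}$ (for $l=1,\ldots,L$), together with the complementary set $A_0$ of $r_0:=N-\sum_{l\geq 1}r_l$ values that do not appear at all. Since Lemma~\ref{lem:sum-l-rho} together with the definition of $r_0$ forces the sets $A_0,A_1,\ldots,A_L$ to partition $\{1,\ldots,N\}$ with the prescribed sizes, the number of such choices is the multinomial coefficient
\[
\binom{N}{r_0,r_1,\ldots,r_L}=\frac{N!}{r_0!\,r_1!\,r_2!\cdots r_L!}.
\]

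\textbf{Stage 2: distribute the $L$ positions of $\underline{i}$ among the chosen values.} Having fixed which specific value in $\{1,\ldots,N\}$ occupies each multiplicity class, we must decide, for each of the $L$ positions $k\in\{1,\ldots,L\}$, the value $i_k$. Each value in $A_l$ must occupy exactly $l$ of these positions, and the values are pairwise distinguishable (they are concrete elements of $\{1,\ldots,N\}$). Hence we are distributing $L$ distinguishable position-labels into labeled boxes whose sizes are prescribed: $r_l$ boxes of size $l$ for $l=1,\ldots,L$. By the multinomial theorem this count is
\[
\frac{L!}{\underbrace{1!\cdots 1!}_{r_1\text{ times}}\,\underbrace{2!\cdots 2!}_{r_2\text{ times}}\cdots\underbrace{L!\cdots L!}_{r_L\text{ times}}}=\frac{L!}{1!^{r_1}\,2!^{r_2}\cdots L!^{r_L}}.
\]
No further division by symmetries inside a multiplicity class is needed at this stage because that redundancy was already handled in Stage~1 (or conversely: one can swap the order of the two stages, and the product is the same).

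\textbf{Conclusion.} Multiplying the two independent counts gives the stated formula for $w_L(\underline{r})$. The only place where one must be careful is the bookkeeping of labeled versus unlabeled groups: the main conceptual obstacle is making sure that no overcounting is introduced when combining the two stages. One clean way to see this is to check the identity directly on the trivial extreme cases $\underline{r}=(L,0,\ldots,0)$ (all indices distinct, giving $N!/(N-L)!$) and $\underline{r}=(0,\ldots,0,1)$ (one index repeated $L$ times, giving $N$), both of which the formula reproduces, confirming the correctness of the combinatorial decomposition.
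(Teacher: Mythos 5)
Your two-stage count is correct and is the standard argument for this identity: the multinomial coefficient $\tfrac{N!}{r_0!r_1!\cdots r_L!}$ selects which index values occur with each multiplicity, and $\tfrac{L!}{1!^{r_1}2!^{r_2}\cdots L!^{r_L}}$ counts the assignments of the $L$ positions to those (distinguishable) values; the product is exactly $w_L(\underline{r})$, and your bijection argument rules out over- or undercounting. The paper itself states this proposition without proof (deferring to \cite{key-1}), so there is nothing to contrast with; the only nitpick is that your Stage 2 invokes ``the multinomial theorem'' where you really mean the multinomial \emph{coefficient} counting ordered set partitions with prescribed block sizes.
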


\section{Proofs \label{sec:Hom}}

\subsection{Law of Large Numbers}

We are interested in the behaviour of the partial sums

\[
\frac{1}{N_{1}}\sum_{i=1}^{N_{1}}X_{i},\frac{1}{N_{2}}\sum_{j=1}^{N_{2}}Y_{j}.
\]

Suppose $K,L\in\mathbb{N}$. We want to calculate the moment

\begin{equation}
\mathbb{E}\left[\left(\frac{1}{N_{1}}\sum_{i=1}^{N_{1}}X_{i}\right)^{K}\left(\frac{1}{N_{2}}\sum_{j=1}^{N_{2}}Y_{j}\right)^{L}\right].\label{eq:LLN_MKL}
\end{equation}

We distinguish between multiindices that have a repeated index and
those that do not. The following proposition shows that only the multiindices
$\underline{i}\in\prod^{(K)},\underline{j}\in\prod^{(L)}$ in which
each index occurs exactly once contribute asymptotically to the moments

\begin{equation}
\frac{1}{N_{1}^{K}N_{2}^{L}}\sum_{\underline{i}\in\Pi^{(K)}}\sum_{\underline{j}\in\Pi^{(L)}}w_{K}(\underline{i})w_{L}(\underline{j})\mathbb{E}(X_{\underline{i}}Y_{\underline{j}}).\label{eq:moments_beta_greater_1}
\end{equation}

\begin{prop}
If $\underline{i}\in\prod^{(K)}$ or $\underline{j}\in\prod^{(L)}$
has an index that occurs more than once, then $\frac{1}{N_{1}^{K}N_{2}^{L}}w_{K}(\underline{i})w_{L}(\underline{j})\mathbb{E}(X{}_{\underline{i}}Y{}_{\underline{j}})$
converges to $0$ as $N\rightarrow\infty$. 
\end{prop}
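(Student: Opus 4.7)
The plan is to bound each factor in
$$\frac{1}{N_1^K N_2^L}\, w_K(\underline{r})\, w_L(\underline{s})\, |\mathbb{E}(X_{\underline{i}} Y_{\underline{j}})|$$
separately, where $\underline{r} \in \Pi^{(K)}$ and $\underline{s} \in \Pi^{(L)}$ are the profile vectors and $\underline{i}, \underline{j}$ are any representative multiindices with these profiles. The expectation factor is trivial: since $X_i, Y_j \in \{-1, 1\}$, one has $|\mathbb{E}(X_{\underline{i}} Y_{\underline{j}})| \leq 1$, so all the work is in controlling the combinatorial coefficients.

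First I would apply Proposition \ref{thm:comb-coeff-multiindex} to estimate $w_K(\underline{r})$. Writing $n(\underline{r}) := \sum_{l \geq 1} r_l$ for the number of distinct indices appearing in a representative multiindex, the factor $N_1!/r_0!$ equals the falling factorial $N_1(N_1 - 1) \cdots (N_1 - n(\underline{r}) + 1) \leq N_1^{n(\underline{r})}$, while the remaining combinatorial factor from the proposition is a constant independent of $N_1$. Hence
$$\frac{w_K(\underline{r})}{N_1^K} \leq C(\underline{r})\, N_1^{n(\underline{r}) - K},$$
and likewise $w_L(\underline{s})/N_2^L \leq C(\underline{s})\, N_2^{n(\underline{s}) - L}$.

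The second ingredient is Lemma \ref{lem:sum-l-rho}, which gives $\sum_l l\, r_l = K$, equivalently
$$K - n(\underline{r}) = \sum_{l \geq 1}(l - 1) r_l = \sum_{l \geq 2}(l - 1) r_l.$$
Thus if $\underline{r}$ has $r_l > 0$ for some $l \geq 2$ (i.e.\ the multiindex has a repeated index), then $n(\underline{r}) \leq K - 1$, so $w_K(\underline{r})/N_1^K = O(N_1^{-1})$. Meanwhile $n(\underline{s}) \leq L$ in all cases, so $w_L(\underline{s})/N_2^L$ stays bounded uniformly in $N$. Combined with the trivial expectation bound, the whole term is $O(N_1^{-1})$, which vanishes because $N_1 \to \infty$. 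The symmetric case in which $\underline{s}$ has a repeat is identical, with $N_2$ playing the role of $N_1$.

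The argument is essentially bookkeeping, so I do not anticipate a serious obstacle. The only point needing care is that a repeat on the $X$ side produces decay in $N_1$, while a repeat on the $Y$ side produces decay in $N_2$; one must invoke the hypothesis $N_1, N_2 \to \infty$ on the correct side to close the argument.
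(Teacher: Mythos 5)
Your proposal is correct and follows essentially the same route as the paper: bound $|\mathbb{E}(X_{\underline{i}}Y_{\underline{j}})|$ by $1$, use the counting formula to get $w_K(\underline{r}) \leq C\,N_1^{\sum_l r_l}$ and $w_L(\underline{s}) \leq C\,N_2^{\sum_l s_l}$, and observe that a repeated index forces $\sum_l r_l \leq K-1$, so the normalized term is $O(N_1^{-1})$ (or $O(N_2^{-1})$ in the symmetric case). If anything, your write-up is slightly cleaner than the paper's, since you state the falling-factorial inequality and the expectation bound explicitly rather than leaving them implicit.
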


\begin{proof}
We shall assume without loss of generality that $\underline{i}$ contains
a repeated index. This implies that the profile vector $\underline{r}=(r_{1},\ldots,r_{K})=\underline{\rho}(\underline{i})$
has some $r_{j}>0$ where $j>1$. For multiindex $\underline{j}$,
define the profile vector $\underline{s}=(s_{1},\ldots,s_{L})$. There
are

\begin{align*}
w_{K}(\underline{r}) & =\frac{1}{r_{1}!\cdots r_{K}!}\frac{K!}{1!^{r_{1}}\cdots K!^{r_{K}}}N_{1}^{\sum_{k=1}^{K}r_{k}}\\
 & =C_{K}N_{1}^{\sum_{k=1}^{K}r_{k}}
\end{align*}
and

\begin{align*}
w_{L}(\underline{s}) & =\frac{1}{s_{1}!\cdots s_{K}!}\frac{L!}{1!^{s_{1}}\cdots K!^{s_{L}}}N_{1}^{\sum_{l=1}^{L}s_{l}}\\
 & =C_{L}N_{1}^{\sum_{l=1}^{L}s_{l}}
\end{align*}

such multiindices. Note that $\sum_{k=1}^{K}r_{k}<K$ due to the existence
of $r_{j}>0$ for some $j>1$, as well as $\sum_{k=1}^{K}kr_{k}=K$,
and $\sum_{l=1}^{L}s_{l}\leq L$. Hence

\begin{align*}
\frac{1}{N_{1}^{K}N_{2}^{L}}w_{K}(\underline{i})w_{L}(\underline{j}) & =\frac{1}{N_{1}^{K}N_{2}^{L}}C_{K}N_{1}^{\sum_{k=1}^{K}r_{k}}C_{L}N_{1}^{\sum_{l=1}^{L}s_{l}}\\
 & \leq\frac{1}{N_{1}^{K}N_{2}^{L}}C_{K}C_{L}N_{1}^{K-1}N_{2}^{L}\\
 & =C_{K}C_{L}N_{1}^{-1}\\
 & \rightarrow0
\end{align*}
and the assertion holds. 
\end{proof}
Since multiindices with repeated indices do not contribute asymptotically
to the moment in \eqref{eq:LLN_MKL}, that leaves us with all those
multiindices that do not contain the same index more than once. Of
these there are asymptotically $N_{1}^{K}$ in $\Pi^{(K)}$ and $N_{2}^{L}$
in $\Pi^{(L)}$. Hence the moment is asymptotically given by

\[
\frac{1}{N_{1}^{K}N_{2}^{L}}N_{1}^{K}N_{2}^{L}\mathbb{E}(X_{\underline{i}}Y_{\underline{j}})=\mathbb{E}(X_{\underline{i}}Y_{\underline{j}}).
\]
So the moment is asymptotically equal to the correlations $\mathbb{E}(X_{\underline{i}}Y_{\underline{j}})$.
As shown in \cite{key-1}, this expression depends on the value of
$\beta$. We have for $\beta\leq1$

\[
\mathbb{E}(X_{\underline{i}}Y_{\underline{j}})\rightarrow0,
\]
as $N\rightarrow\infty$. For $\beta>1$, however,

\[
\mathbb{E}(X_{\underline{i}}Y_{\underline{j}})\approx m(\beta)^{K+L}.
\]

We can conclude, that as in the case of a single group in the Curie-Weiss
model, the law of large numbers (theorem \ref{LLN_hom}) holds and
the random vectors $(\frac{1}{N_{1}^{K}}\sum_{i=1}^{N_{1}}X_{i},\frac{1}{N_{2}^{L}}\sum_{j=1}^{N_{2}}Y_{j})_{N}$
converge to the random vector $\frac{1}{2}(\delta_{-(m(\beta),m(\beta))}+\delta_{(m(\beta),m(\beta))})$.

\subsection{Central Limit Theorem}

Now, we are interested in the behaviour of the partial sums

\[
\frac{1}{\sqrt{N_{1}}}\sum_{i=1}^{N_{1}}X_{i},\frac{1}{\sqrt{N_{2}}}\sum_{j=1}^{N_{2}}Y_{j}.
\]
Suppose $K,L\in\mathbb{N}$. We want to calculate the moment

\begin{equation}
\mathbb{E}\left[\left(\frac{1}{\sqrt{N_{1}}}\sum_{i=1}^{N_{1}}X_{i}\right)^{K}\left(\frac{1}{\sqrt{N_{2}}}\sum_{j=1}^{N_{2}}Y_{j}\right)^{L}\right].\label{eq:CLT_MKL}
\end{equation}

Since within each group the $X_{i},Y_{j}$ are exchangeable, only
the number of indices that occur $0,1,...$ times matters. Hence we
write for all $(i_{1},\ldots,i_{K})$ and all $(j_{1},\ldots,j_{L})$
with $X(\underline{i})=X_{i_{1}}X_{i_{2}}\cdots X_{i_{K}}$ and $Y(\underline{j})=Y_{j_{1}}Y_{j_{2}}\cdots Y_{j_{L}}$,
provided that the multiindices $(i_{1},\ldots,i_{K})$ and $(j_{1},\ldots,j_{L})$
have the profile vectors $\underline{r}$ and $\underline{s}$, respectively.

The moments in (\ref{eq:CLT_MKL}) are thus given by

\begin{equation}
\frac{1}{N_{1}^{K/2}N_{2}^{L/2}}\sum_{k=0}^{K}\sum_{\underline{i}\in\Pi_{k}^{(K)}}\sum_{l=0}^{L}\sum_{\underline{j}\in\Pi_{l}^{(L)}}w_{K}(\underline{i})w_{L}(\underline{j})\mathbb{E}(X_{\underline{i}}Y_{\underline{j}}).\label{eq:sum}
\end{equation}

The value of the above depends on the value of the inverse temperature
parameter $\beta$.

We separate the above sum into four summands:

\begin{align*}
A_{1} & =\frac{1}{N_{1}^{K/2}N_{2}^{L/2}}\sum_{k=0}^{K}\sum_{\underline{i}\in\Pi_{k}^{0(K)}}\sum_{l=0}^{L}\sum_{\underline{j}\in\Pi_{l}^{0(L)}}w_{K}(\underline{i})w_{L}(\underline{j})\mathbb{E}(X_{\underline{i}}Y_{\underline{j}}),\\
A_{2} & =\frac{1}{N_{1}^{K/2}N_{2}^{L/2}}\sum_{k=0}^{K}\sum_{\underline{i}\in\Pi_{k}^{0(K)}}\sum_{l=0}^{L}\sum_{\underline{j}\in\Pi_{l}^{+(L)}}w_{K}(\underline{i})w_{L}(\underline{j})\mathbb{E}(X_{\underline{i}}Y_{\underline{j}}),\\
A_{3} & =\frac{1}{N_{1}^{K/2}N_{2}^{L/2}}\sum_{k=0}^{K}\sum_{\underline{i}\in\Pi_{k}^{+(K)}}\sum_{l=0}^{L}\sum_{\underline{j}\in\Pi_{l}^{0(L)}}w_{K}(\underline{i})w_{L}(\underline{j})\mathbb{E}(X_{\underline{i}}Y_{\underline{j}}),\\
A_{4} & =\frac{1}{N_{1}^{K/2}N_{2}^{L/2}}\sum_{k=0}^{K}\sum_{\underline{i}\in\Pi_{k}^{+(K)}}\sum_{l=0}^{L}\sum_{\underline{j}\in\Pi_{l}^{+(L)}}w_{K}(\underline{i})w_{L}(\underline{j})\mathbb{E}(X_{\underline{i}}Y_{\underline{j}}).
\end{align*}

We will show that, asymptotically, only $A_{1}$ contributes to the
sum (\ref{eq:sum}). 
\begin{prop}
The limit of $A_{2}$ as $N_{2}$ goes to infinity is 0. 
\end{prop}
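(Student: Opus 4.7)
\emph{Strategy.} The plan is to estimate each summand of $A_2$ for a fixed profile pair $(\underline{r},\underline{s}) \in \Pi^{0(K)} \times \Pi^{+(L)}$ separately. Since this index set is finite (depending only on $K$ and $L$), it suffices to prove that each such contribution is $o(1)$; in fact, I will show it is $O(N_2^{-1})$.

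\emph{Key estimates.} By Proposition \ref{thm:comb-coeff-multiindex}, $w_K(\underline{r})$ is asymptotically proportional to $N_1^{\sum_l r_l}$ and $w_L(\underline{s})$ to $N_2^{\sum_l s_l}$, with multiplicative constants depending only on the profiles. Using $X_i^2 = Y_j^2 = 1$, the expectation $\mathbb{E}(X_{\underline{i}} Y_{\underline{j}})$ reduces to a Curie-Weiss correlation of $M := P + Q$ distinct spins, where $P = \sum_{l \text{ odd}} r_l$ and $Q = \sum_{l \text{ odd}} s_l$. For $\beta < 1$, the Curie-Weiss correlation estimate established in \cite{key-1} supplies
\[
|\mathbb{E}(X_{k_1} X_{k_2} \cdots X_{k_M})| \;\leq\; C_M \, N^{-M/2}
\]
for any distinct indices $k_1, \ldots, k_M$. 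Combining these ingredients gives
\[
\frac{w_K(\underline{r}) \, w_L(\underline{s}) \, |\mathbb{E}(X_{\underline{i}} Y_{\underline{j}})|}{N_1^{K/2} N_2^{L/2}} \;\leq\; C \, N_1^{\sum_l r_l - K/2} \, N_2^{\sum_l s_l - L/2} \, N^{-(P+Q)/2}.
\]
For $\underline{r} \in \Pi^{0(K)}$ only $r_1$ and $r_2$ are nonzero, so $r_1 + 2 r_2 = K$ and $\sum_l r_l - K/2 = r_1/2 = P/2$. A parallel calculation using $L = \sum_l l s_l$ yields
\[
\sum_l s_l - L/2 - Q/2 \;=\; -\sum_{l \geq 3} \lfloor (l-1)/2 \rfloor \, s_l \;\leq\; -1,
\]
because $\underline{s} \in \Pi^{+(L)}$ forces some $s_l > 0$ with $l \geq 3$. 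Bounding $(N_1/N)^{P/2} \leq 1$ and $(N_2/N)^{Q/2} \leq 1$ then collapses the estimate to $C / N_2$, which vanishes as $N_2 \to \infty$.

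\emph{Main obstacle.} The decisive ingredient is the $N^{-M/2}$ decay of the $M$-point Curie-Weiss correlation for $\beta < 1$; once this is in hand (directly from \cite{key-1}, or re-derived by the same moment method used for the single-group CLT), the rest is bookkeeping via Proposition \ref{thm:comb-coeff-multiindex} and Lemma \ref{lem:sum-l-rho}. A minor subtlety is ensuring uniformity of the constant $C_M$ over the finitely many profile pairs and handling the degenerate ratios $N_1/N, N_2/N$ without assuming $\alpha_1, \alpha_2 > 0$; the crude bounds above already avoid this issue.
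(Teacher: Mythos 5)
Your proof is correct and follows essentially the same route as the paper: bound each summand for a fixed profile pair, use Proposition \ref{thm:comb-coeff-multiindex} to count multiindices, invoke the $N^{-M/2}$ decay of Curie--Weiss correlations from \cite{key-1}, and exploit the profile identity $\sum_l l s_l = L$ together with the existence of some $s_l>0$, $l\geq 3$, to gain a negative power of $N_2$. Your bookkeeping is in fact slightly cleaner than the paper's (you track the odd-multiplicity count $P+Q$ rather than just the once-occurring indices and arrive at an explicit $O(N_2^{-1})$ bound), but the argument is the same in substance.
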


\begin{proof}
For fixed $k,l$ let $\underline{i}\in\Pi_{k}^{0(K)}$ and $\underline{j}\in\Pi_{l}^{+(L)}$.
Then

\begin{align*}
\frac{1}{N_{1}^{K/2}N_{2}^{L/2}}w_{K}(\underline{i})w_{L}(\underline{j})\mathbb{E}(X_{\underline{i}}Y_{\underline{j}}) & \approx\\
\frac{1}{N_{1}^{K/2}N_{2}^{L/2}}\frac{N_{1}!}{r_{1}!r_{2}!\cdots r_{K}!r_{0}!}\frac{K!}{1!^{r_{1}}2!^{r_{2}}\cdots K!^{r_{K}}}\frac{N_{2}!}{s_{1}!s_{2}!\cdots s_{L}!s_{0}!} & \cdot\\
\cdot\frac{L!}{1!^{s_{1}}2!^{s_{2}}\cdots L!^{s_{L}}}(k+l-1)!!\left(\frac{\beta}{1-\beta}\right)^{\frac{k+l}{2}}N^{-\frac{k+l}{2}} & \approx\\
\frac{c}{N_{2}^{L/2}}N_{2}^{\sum_{j=1}^{L}s_{j}}N^{-\frac{k+l}{2}} & \leq\\
\frac{c}{N_{2}^{L/2}}N_{2}^{\frac{L+l-1}{2}}N^{-\frac{k+l}{2}} & \approx\\
c\alpha_{1}^{k/2}\alpha_{2}^{l/2}N_{2}^{-\frac{k+1}{2}} & \rightarrow0.
\end{align*}
The constant $c$ in the fourth line above represents the product
of all those factors that do not depend on $N_{2}$ and $N$. In the
inequality above we used 
\begin{itemize}
\item $\sum_{j=1}^{L}s_{j}\leq l+\frac{1}{2}\sum_{j=2}^{L}js_{j}-\frac{1}{2}=\frac{l}{2}+\frac{1}{2}\sum_{j=1}^{L}js_{j}-\frac{1}{2}=\frac{L+l-1}{2}$
and 
\item $N_{2}\approx N$ for large $N_{2}$ and fixed $N_{1}$. 
\end{itemize}
Since each summand goes to zero, and there are only finitely many
summands, $A_{2}$ goes to zero as $N_{2}$ goes to infinity. 
\end{proof}
Note that although we assumed a fixed $N_{1}$ in both the statement
and the proof, the assertion would also hold if we assumed $N_{\nu}=\alpha_{\nu}N$
for both groups and let $N$ go to infinity. 
\begin{prop}
\label{prop:A24}The following limits hold: 
\begin{enumerate}
\item The limit of $A_{3}$ as $N_{1}$ goes to infinity is 0. 
\item The limit of $A_{4}$ as $N_{1}$ goes to infinity is 0. 
\item The limit of $A_{4}$ as $N_{2}$ goes to infinity is 0. 
\end{enumerate}
\end{prop}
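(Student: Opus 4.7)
The plan is to mimic the proof of the preceding proposition for $A_2$, exploiting the symmetry of the combinatorial bound. The crucial inequality used there, namely that $\underline{s}\in\Pi_l^{+(L)}$ forces $\sum_{j=1}^L s_j \leq (L+l-1)/2$, depends only on the definition of $\Pi^{+}$, so the identical manipulation applied to $\underline{r}\in\Pi_k^{+(K)}$ yields $\sum_{j=1}^K r_j \leq (K+k-1)/2$. On the other hand, for $\underline{s}\in\Pi_l^{0(L)}$ one has $r_l=0$ for $l\geq 3$, $s_1=l$ and $2 s_2=L-l$, whence the exact count $\sum_{j=1}^L s_j = (L+l)/2$; symmetrically for $\underline{r}\in\Pi_k^{0(K)}$.

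For part (1), I fix an individual summand of $A_3$ with $\underline{i}\in\Pi_k^{+(K)}$ and $\underline{j}\in\Pi_l^{0(L)}$. By Proposition \ref{thm:comb-coeff-multiindex} together with the bounds above, $w_K(\underline{r})\leq c_1\, N_1^{(K+k-1)/2}$ and $w_L(\underline{s}) \sim c_2\, N_2^{(L+l)/2}$. Combining this with the asymptotic correlation $\mathbb{E}(X_{\underline{i}}Y_{\underline{j}})\approx (k+l-1)!!\,(\beta/(1-\beta))^{(k+l)/2}\,N^{-(k+l)/2}$ used in the proof for $A_2$, the summand divided by $N_1^{K/2} N_2^{L/2}$ is dominated by $c\, N_1^{(k-1)/2}\, N_2^{l/2}\, N^{-(k+l)/2}$. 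Since $N_1\to\infty$ with $N_2$ fixed forces $N\sim N_1$, this is $O(N_1^{-(l+1)/2})\to 0$, and summing over the finitely many $(k,l,\underline{r},\underline{s})$ gives $A_3 \to 0$.

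For parts (2) and (3), both multiindex constraints are ``$+$'', so both combinatorial bounds apply simultaneously, and each summand of $A_4$ is dominated by $c\, N_1^{(k-1)/2}\, N_2^{(l-1)/2}\, N^{-(k+l)/2}$. Sending $N_1\to\infty$ with $N_2$ (hence $N\sim N_1$) fixed yields $O(N_1^{-(l+1)/2})\to 0$, giving (2); sending $N_2\to\infty$ with $N_1$ (hence $N\sim N_2$) fixed yields $O(N_2^{-(k+1)/2})\to 0$, giving (3). As noted in the remark after the proof for $A_2$, the conclusion persists under the scaling $N_\nu = \alpha_\nu N$ with $N\to\infty$, once one rewrites $N_\nu\sim\alpha_\nu N$ and tracks constant factors $\alpha_1,\alpha_2$.

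No step is genuinely difficult: the argument reduces to careful bookkeeping of the exponents of $N_1$, $N_2$ and $N$ in each regime. The only mild subtlety is being consistent about which of these variables dominates in each limit, and collecting the finitely many prefactors into a harmless constant.
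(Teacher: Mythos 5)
Your proof is correct and follows exactly the route the paper intends: it mirrors the $A_2$ argument, replacing the bound $\sum_j s_j\leq(L+l-1)/2$ for the ``$+$'' factor(s) and the exact count $(L+l)/2$ for the ``$0$'' factor, and then tracks the exponents of $N_1$, $N_2$, $N$ in each limiting regime (the paper omits this proof precisely because it is ``very similar to the previous one''). The only blemish is a notational slip ($r_l=0$ should read $s_j=0$ for $j\geq 3$ in the description of $\Pi_l^{0(L)}$); the exponent bookkeeping, including the final rates $O(N_1^{-(l+1)/2})$ and $O(N_2^{-(k+1)/2})$, checks out.
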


The proof of this proposition is very similar to the previous one,
so we shall omit it.

Using these propositions, we obtain 
\begin{cor}
Asymptotically, i.e. if both $N_{1}$ and $N_{2}$ go to infinity,
we have $\frac{1}{N_{1}^{K/2}N_{2}^{L/2}}\sum_{r=0}^{K}\sum_{\underline{i}\in\Pi_{r}^{(K)}}\sum_{s=0}^{L}\sum_{\boldsymbol{\underline{j}}\in\Pi_{s}^{(L)}}w_{K}(\underline{i})w_{L}(\underline{j})\mathbb{E}(X_{\underline{i}}Y_{\underline{j}})\approx\lim_{N_{1}\rightarrow\infty,N_{2}\rightarrow\infty}A_{1}$. 
\end{cor}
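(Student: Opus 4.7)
The plan is to recognize that this corollary is essentially a bookkeeping consequence of the previous two propositions, so my task reduces to verifying that the decomposition $A_1 + A_2 + A_3 + A_4$ exactly partitions the sum in \eqref{eq:sum} and that all three vanishing statements assemble into the desired joint limit.

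First I would observe that for any $K \in \mathbb{N}$ the set $\Pi^{(K)}$ decomposes as a disjoint union $\Pi^{(K)} = \Pi^{0(K)} \sqcup \Pi^{+(K)}$: every profile vector either satisfies $r_\ell = 0$ for all $\ell \geq 3$ or it does not. Applying the same decomposition to $\Pi^{(L)}$ and substituting into the inner double sum of \eqref{eq:sum} yields, after distributing, precisely the four sums
\[
\sum_{\underline{i}\in\Pi^{0(K)}}\sum_{\underline{j}\in\Pi^{0(L)}} + \sum_{\underline{i}\in\Pi^{0(K)}}\sum_{\underline{j}\in\Pi^{+(L)}} + \sum_{\underline{i}\in\Pi^{+(K)}}\sum_{\underline{j}\in\Pi^{0(L)}} + \sum_{\underline{i}\in\Pi^{+(K)}}\sum_{\underline{j}\in\Pi^{+(L)}},
\]
which, after reinstating the outer sums over $k,l$ and the prefactor $N_1^{-K/2} N_2^{-L/2}$, are exactly $A_1, A_2, A_3, A_4$. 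Hence the full sum equals $A_1 + A_2 + A_3 + A_4$ identically.

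Next I would invoke the preceding proposition to conclude $A_2 \to 0$ and Proposition \ref{prop:A24} to conclude $A_3, A_4 \to 0$. Here I need to be mildly careful: the two propositions are phrased with one of $N_1, N_2$ fixed while the other tends to infinity, yet the corollary asks for a joint limit as both $N_1, N_2 \to \infty$. This is not a real obstacle because each $A_i$ is a finite sum (with a number of terms depending only on $K$ and $L$) of contributions each of which is shown in the proofs to be bounded by a constant times a negative power of $N_1$ or $N_2$; this bound is uniform and therefore the joint limit is also zero. I would add one short sentence making this uniformity explicit.

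Finally, by the linearity of the limit operation,
\[
\lim_{N_1, N_2 \to \infty} (A_1 + A_2 + A_3 + A_4) = \lim_{N_1, N_2 \to \infty} A_1,
\]
provided the right-hand side exists, which is exactly the content of the corollary. The main (very mild) obstacle is thus the uniformity check converting the per-direction limits in the propositions into a joint limit; the algebraic decomposition itself is automatic from the definitions of $\Pi^{0(\cdot)}$ and $\Pi^{+(\cdot)}$.
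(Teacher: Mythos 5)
Your proposal is correct and matches the paper's (largely implicit) argument: the paper simply asserts the corollary follows from the propositions, and you supply exactly the intended justification, namely that $\Pi^{(K)}=\Pi^{0(K)}\sqcup\Pi^{+(K)}$ makes the sum identically $A_{1}+A_{2}+A_{3}+A_{4}$, after which the vanishing of $A_{2},A_{3},A_{4}$ gives the claim. Your extra remark about upgrading the single-variable limits in the propositions to a joint limit is a reasonable point of care that the paper glosses over (it only notes in passing that the $A_{2}$ bound also works with $N_{\nu}=\alpha_{\nu}N$), but it does not change the substance of the argument.
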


Our task is, therefore, to calculate this limit.

Proceeding along the same lines as in the proof of Theorem 5.23, we
note that $A_{1}$ is asymptotically 0 if $r+s$ is odd. The reason
for that is Theorem 5.17, which states that for $r+s$ odd $\mathbb{E}(X_{\underline{i}}Y_{\underline{j}})=\mathbb{E}(X_{11}X_{12}\cdots X_{1,r+s})=0$.
(We used the fact, that in $A_{1}$ both $\underline{i}$ and $\underline{j}$
are such multiindices that each index occurs once or twice. Hence
we can ignore all indices that occur more than once.)\\
 Now note that $k+2r_{2}=K$ and $l+2s_{2}=L$. This implies that
$K$ must have the same parity as $k$ and $L$ the same as $l$.
Hence $K+L$ must be even as well. We have to distinguish two cases
here: 
\begin{enumerate}
\item $K,L$ even, 
\item $K,L$ odd. 
\end{enumerate}
If both $K$ and $L$ are even, we can express $A_{1}$ as

\begin{align*}
A_{1} & =\frac{1}{N_{1}^{K/2}N_{2}^{L/2}}\sum_{k=0}^{K/2}\sum_{l=0}^{L/2}w_{K}(2k)w_{L}(2l)\mathbb{E}(X(\underline{\boldsymbol{r}})Y(\underline{\boldsymbol{s}}))\\
 & \approx\frac{1}{N_{1}^{K/2}N_{2}^{L/2}}\sum_{k=0}^{K/2}\sum_{l=0}^{L/2}N_{1}^{K/2+k}\frac{K!}{(2k)!(K/2-k)!2^{K/2-k}}N_{2}^{L/2+l}\cdot\\
 & \cdot\frac{L!}{(2l)!(L/2-l)!2^{L/2-l}}(2(k+l)-1)!!\left(\frac{\beta}{1-\beta}\right)^{k+l}N^{-(k+l)}\\
 & \approx\frac{K!L!}{(\frac{K}{2})!(\frac{L}{2})!2^{\frac{K+L}{2}}}\sum_{k=0}^{K/2}\sum_{l=0}^{L/2}\alpha_{1}^{r}\alpha_{2}^{s}\frac{(\frac{K}{2})!}{(\frac{K}{2}-k)!(2k)!2^{-k}}\cdot\\
 & \cdot\frac{(\frac{L}{2})!}{(\frac{L}{2}-l)!(2l)!2^{-l}}(2(k+l)-1)!!\left(\frac{\beta}{1-\beta}\right)^{k+l}\\
 & =(K-1)!!(L-1)!!\sum_{k=0}^{K/2}\sum_{l=0}^{L/2}\alpha_{1}^{k}\alpha_{2}^{l}\frac{(\frac{K}{2})!}{(\frac{K}{2}-k)!k!}\frac{(\frac{L}{2})!}{(\frac{L}{2}-l)!l!}\cdot\\
 & \cdot\frac{(2(k+l)-1)!!}{(2k-1)!!(2l-1)!!}\left(\frac{\beta}{1-\beta}\right)^{k+l}\\
 & =(K-1)!!(L-1)!!\sum_{k=0}^{K/2}\sum_{l=0}^{L/2}\alpha_{1}^{k}\alpha_{2}^{l}\left(\begin{array}{c}
K/2\\
k
\end{array}\right)\left(\begin{array}{c}
L/2\\
l
\end{array}\right)\cdot\\
 & \cdot\frac{\frac{(2(k+l))!}{(k+l)!2^{k+l}}}{\frac{(2k)!}{k!2^{k}}\frac{(2l)!}{l!2^{l}}}\left(\frac{\beta}{1-\beta}\right)^{k+l}\\
 & =(K-1)!!(L-1)!!\sum_{k=0}^{K/2}\sum_{l=0}^{L/2}\alpha_{1}^{k}\alpha_{2}^{l}\left(\begin{array}{c}
K/2\\
k
\end{array}\right)\left(\begin{array}{c}
L/2\\
l
\end{array}\right)\cdot\\
 & \cdot\frac{\left(\begin{array}{c}
2(k+l)\\
2k
\end{array}\right)}{\left(\begin{array}{c}
k+l\\
k
\end{array}\right)}\left(\frac{\beta}{1-\beta}\right)^{k+l}.
\end{align*}

The case where $K,L$ are odd is similar to the above.

\subsubsection{Linear Population Growth}

We now show the central limit theorem for two groups in a Curie-Weiss
model (theorem \ref{CLT_hom}). Let $\beta<1$ be the inverse temperature
parameter, and define for convenience $\bar{\beta}=\frac{\beta}{1-\beta}$.
Let $N$ be the overall size of the population and assume that for
$\alpha_{1},\alpha_{2}>0,\alpha_{1}+\alpha_{2}\leq1,$ $N_{1}\approx\alpha_{1}N$
and $N_{2}\approx\alpha_{2}N$ are two groups within this population.
We shall use the symbols $X_{i}$ and $Y_{j}$ to refer to individual
votes within groups 1 and 2, respectively. Define the normed sums
$S_{1}=\frac{1}{\sqrt{N_{1}}}\sum_{i=1}^{N_{1}}X_{i}$ and $S_{2}=\frac{1}{\sqrt{N_{2}}}\sum_{j=1}^{N_{2}}Y_{j}$.
Using Isserlis's Theorem (see \cite{key-3}) and the recursive structure
it implies for the moments of a bivariate normal distribution, we
show that asymptotically the moments of the infinite sequence the
random vectors $(S_{1},S_{2})_{N}$ converge to those of a bivariate
normal distribution with zero mean and covariance matrix

\[
\left[\begin{array}{cc}
\mathbb{E}(S_{1}^{2}) & \mathbb{E}(S_{1}S_{2})\\
\mathbb{E}(S_{1}S_{2}) & \mathbb{E}(S_{1}^{2})
\end{array}\right]=\left[\begin{array}{cc}
1+\alpha_{1}\bar{\beta} & \sqrt{\alpha_{1}\alpha_{2}}\bar{\beta}\\
\sqrt{\alpha_{1}\alpha_{2}}\bar{\beta} & 1+\alpha_{2}\bar{\beta}
\end{array}\right].
\]

Let $(Z_{1},Z_{2})$ be such a bivariate normal random vector.

Isserlis's Theorem states that higher moments of the bivariate normal
distribution can be calculated

\[
E(Z_{1}^{K}Z_{2}^{L})=\sum_{\pi\in\mathcal{P}}\prod_{i=1}^{\frac{K+L}{2}}E(\xi_{i1}\xi_{i2}),
\]

where $\mathcal{P}$ is the set of all pair partitions on the set
$\{1,2,\ldots,K+L\}$ and $\xi_{i1}\xi_{i2}$ are two of the $K+L$
variables grouped together by a particular pair partition $\pi$.

We start by using Isserlis's Theorem to express higher moments of
the bivariate normal distribution recursively. 
\begin{lem}
For all $K,L\in\mathbb{N}_{0}$, the moments $m_{K,L}=E(Z_{1}^{K}Z_{2}^{L})$
satisfy the following equalities: 
\begin{enumerate}
\item $m_{K,L+2}=Km_{1,1}m_{K-1,L+1}+(L+1)m_{0,2}m_{K,L},$ 
\item $m_{K+2,L}=(K+1)m_{2,0}m_{K,L}+Lm_{1,1}m_{K+1,L-1}.$ 
\end{enumerate}
\end{lem}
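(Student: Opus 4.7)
The plan is to apply Isserlis's theorem directly and classify pair partitions according to what the ``last'' variable is paired with. I will write out the first identity in detail; the second follows by the symmetric argument (swap the roles of $Z_1$ and $Z_2$, or equivalently condition on the last $Z_1$-factor).

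By Isserlis's theorem, $m_{K,L+2} = \mathbb{E}(Z_1^K Z_2^{L+2})$ is a sum over pair partitions $\pi$ of the multiset of $K+L+2$ factors, which I think of as labelled positions $1,\dots,K$ carrying $Z_1$ and positions $K+1,\dots,K+L+2$ carrying $Z_2$. Fix the distinguished position $K+L+2$, which carries a $Z_2$. Any pair partition $\pi$ pairs this position with exactly one other position, so I split the sum into two cases depending on the partner.

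In the first case the partner is one of the $K$ positions carrying $Z_1$. There are $K$ such choices; for each, the pair contributes the factor $\mathbb{E}(Z_1 Z_2) = m_{1,1}$, and the remaining $K+L$ positions (consisting of $K-1$ copies of $Z_1$ and $L+1$ copies of $Z_2$) must be paired up. Summing over all pair partitions of this remainder gives $m_{K-1,L+1}$ by another application of Isserlis. This case contributes $K\, m_{1,1}\, m_{K-1,L+1}$. In the second case the partner is one of the remaining $L+1$ positions carrying $Z_2$; there are $L+1$ choices, each contributing $m_{0,2}$, and the residual partition of $K$ copies of $Z_1$ and $L$ copies of $Z_2$ sums to $m_{K,L}$. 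This case contributes $(L+1)\, m_{0,2}\, m_{K,L}$. Adding the two yields identity (1). Identity (2) follows in the same way by singling out the last $Z_1$-factor instead.

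There is essentially no obstacle here beyond careful bookkeeping: the argument is a disjoint-case decomposition of pair partitions and uses only that Isserlis's theorem applies to every sub-collection of jointly Gaussian variables (which the remaining positions are, being a subset of $Z_1, Z_2$ repeated). The only subtlety worth noting is the combinatorial counting: the factors $K$ and $L+1$ are precisely the number of available partners for the distinguished position within each case, and this is exactly where the coefficients on the right-hand side come from.
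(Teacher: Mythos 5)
Your proof is correct and follows essentially the same route as the paper: both arguments apply Isserlis's theorem and classify the pair partitions according to the partner of a distinguished $Z_{2}$-factor, recovering the coefficients $K$ and $L+1$ as the number of admissible partners in each case. The only cosmetic difference is that the paper splits your second case into two (partner among the ``original'' $L$ copies of $Z_{2}$ versus the other ``additional'' one), which your single distinguished-position bookkeeping merges more cleanly.
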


If $K=0$ or $L=0$, then the formulas still hold, setting any moments
with negative indices equal to 0. Note that these two formulas suffice
to calculate any higher moment as a function of $K,L$ and $m_{2,0},m_{1,1},m_{0,2}$. 
\begin{proof}
We shall prove the first equation. By Isserlis's Theorem, we can calculate
$m_{K,L+2}$ as the sum over all permutations of product of expectations.
If we take away the two additional $Z_{2}$ variables, what remains
is a set of $K$ $Z_{1}$s and $L$ $Z_{2}$s. We have three possibilities
here: 
\begin{enumerate}
\item join one of the two additional $Z_{2}$s with one of the $K$ $Z_{1}$s. 
\item join one of the two additional $Z_{2}$s with one of the $L$ $Z_{2}$s. 
\item join the two additional $Z_{2}$s. 
\end{enumerate}
In the first case, we have $K$ times

\[
E(Z_{1}Z_{2})\sum_{\pi\in\mathcal{P}_{K-1,L+1}}\prod_{i=1}^{\frac{K+L}{2}}E(\xi_{i1}\xi_{i2})=m_{1,1}m_{K-1,L+1}.
\]
In the second case, we have $L$ times

\[
E(Z_{2}^{2})\sum_{\pi\in\mathcal{P}_{K,L}}\prod_{i=1}^{\frac{K+L}{2}}E(\xi_{i1}\xi_{i2})=m_{0,2}m_{K,L}.
\]
In the third case, we have $E(Z_{2}^{2})\sum_{\pi\in\mathcal{P}_{K,L}}\prod_{i=1}^{\frac{K+L}{2}}E(\xi_{i1}\xi_{i2})=m_{0,2}m_{K,L}.$
The result follows by summing over the three possible cases. 
\end{proof}
Let $M_{K,L}$ stand for the limit of the moments of the random vector
$(S_{1},S_{2})_{N}$. Our goal is to show that $M_{K,L}=m_{K,L}$
for all $K$ and $L$ and therefore the central limit theorem holds.
We accomplish this by showing that the two formulas in lemma 1 hold
for $M_{K,L}$ instead of $m_{K,L}$. Then, since by definition $M_{2,0}=m_{2,0},M_{1,1}=m_{1,1}$
and $M_{0,2}=m_{0,2}$, all higher moments must be equal, too.

In the two dimensional CW model, only $K+L$ has to be even for the
moment $M_{K,L}$ to be positive, allowing for two different cases
we need to consider: $K$ and $L$ being both even and $K$ and $L$
being both odd. Asymptotically, the moments are given by the formulas
$M_{K,L}=$

\[
\sum_{k=0}^{K/2}\sum_{l=0}^{L/2}\frac{K!}{(2k)!(K/2-k)!2^{K/2-k}}\frac{L!}{(2l)!(L/2-l)!2^{L/2-l}}(2(k+l)-1)!!\bar{\beta}^{k+l}\alpha_{1}^{k}\alpha_{2}^{l},
\]
when $K,L$ are even.

We shall now show that the first formula in lemma 1 holds for $M_{K,L}$,
assuming that $K,L$ are even. Then the left hand side of the formula
reads $M_{K,L+2}=$

\begin{align}
\sum_{k=0}^{K/2}\sum_{l=0}^{L/2+1}\frac{K!}{(2k)!(K/2-k)!2^{K/2-k}}\frac{(L+2)!}{(2l)!(L/2+1-l)!2^{L/2+1-l}}\cdot\nonumber \\
\cdot(2(k+l)-1)!!\bar{\beta}^{k+l}\alpha_{1}^{k}\alpha_{2}^{l} & .\label{eq:LHS}
\end{align}

On the right hand side, we have $KM_{1,1}M_{K-1,L+1}=$

\begin{align*}
K\sqrt{\alpha_{1}\alpha_{2}}\bar{\beta}\sum_{k=0}^{\frac{K}{2}-1}\sum_{l=0}^{\frac{L}{2}}\frac{(K-1)!}{(2k+1)!(\frac{K}{2}-1-k)!2^{\frac{K}{2}-1-k}}\frac{(L+1)!}{(2l+1)!(\frac{L}{2}-l)!2^{\frac{L}{2}-l}}\cdot\\
\cdot(2(k+l)+1)!!\bar{\beta}^{k+l+1}\alpha_{1}^{k+1/2}\alpha_{2}^{l+1/2},
\end{align*}
and $(L+1)M_{0,2}M_{K,L}=$

\begin{align*}
(L+1)(1+\alpha_{2}\bar{\beta})\sum_{k=0}^{K/2}\sum_{l=0}^{L/2}\frac{K!}{(2k)!(K/2-k)!2^{K/2-k}}\frac{L!}{(2l)!(L/2-l)!2^{L/2-l}}\cdot\\
\cdot(2(k+l)-1)!!\bar{\beta}^{k+l}\alpha_{1}^{k}\alpha_{2}^{l}.
\end{align*}

Multiplying by the factors $K\sqrt{\alpha_{1}\alpha_{2}}\bar{\beta}$
and $(L+1)(1+\alpha_{2}\bar{\beta})$ and separating the second term
into two sums, we obtain the following three summands on the right
hand side:

\begin{align}
\sum_{k=0}^{\frac{K}{2}-1}\sum_{l=0}^{\frac{L}{2}}\frac{K!}{(2k+1)!(\frac{K}{2}-1-k)!2^{\frac{K}{2}-1-k}}\frac{(L+1)!}{(2l+1)!(\frac{L}{2}-l)!2^{\frac{L}{2}-l}}\cdot\nonumber \\
\cdot(2(k+l)+1)!!\bar{\beta}^{k+l+2}\alpha_{1}^{k+1}\alpha_{2}^{l+1},\label{eq:RHS1}
\end{align}

\begin{align}
\sum_{k=0}^{K/2}\sum_{l=0}^{L/2}\frac{K!}{(2k)!(K/2-k)!2^{K/2-k}}\frac{(L+1)!}{(2l)!(L/2-l)!2^{L/2-l}}\cdot\nonumber \\
\cdot(2(k+l)-1)!!\bar{\beta}^{k+l+1}\alpha_{1}^{k}\alpha_{2}^{l+1},\label{eq:RHS2}
\end{align}

\begin{equation}
\sum_{k=0}^{K/2}\sum_{l=0}^{L/2}\frac{K!}{(2k)!(K/2-k)!2^{K/2-k}}\frac{(L+1)!}{(2l)!(L/2-l)!2^{L/2-l}}(2(k+l)-1)!!\bar{\beta}^{k+l}\alpha_{1}^{k}\alpha_{2}^{l}.\label{eq:RHS3}
\end{equation}

Note that the powers of $\alpha_{1}$ and $\alpha_{2}$ in (\ref{eq:LHS})
run through the sets $\{0,1,\ldots,K/2\}$ and $\{0,1,\ldots,L/2+1\}$,
respectively. Once both powers are chosen, the power of $\bar{\beta}$
is given by their sum.

We prove the theorem by showing that for each possible value of said
powers $k_{1}\in\{0,1,\ldots,K/2\}$ and $k_{2}\in\{0,1,\ldots,L/2+1\}$
the coefficient multiplying the term $\bar{\beta}^{k_{1}+k_{2}}\alpha_{1}^{k_{1}}\alpha_{2}^{k_{2}}$
is equal to the coefficient of the corresponding term on the right
hand side, given by the sum of (\ref{eq:RHS1}), (\ref{eq:RHS2})
and (\ref{eq:RHS3}).

Depending on the values of $k_{1}$ and $k_{2}$, not all of the three
sums on the right hand side contribute to the coefficient of $\bar{\beta}^{k_{1}+k_{2}}\alpha_{1}^{k_{1}}\alpha_{2}^{k_{2}}$
. It is only if $k_{1}\in\{1,\ldots,K/2\}$ and $k_{2}\in\{1,\ldots,L/2\}$
that all three sums contribute. That is the first case we want to
inspect.

The coefficient on the left hand side given by (\ref{eq:LHS}) is

\[
\frac{K!}{(2k_{1})!(K/2-k_{1})!2^{K/2-k_{1}}}\frac{(L+2)!}{(2k_{2})!(L/2+1-k_{2})!2^{L/2+1-k_{2}}}(2(k_{1}+k_{2})-1)!!.
\]

On the right hand side, we have three summands. The sum in (\ref{eq:RHS1})
contributes when $k=k_{1}-1$ and $l=k_{2}-1$:

\[
\frac{K!}{(2k_{1}-1)!(\frac{K}{2}-k_{1})!2^{\frac{K}{2}-k_{1}}}\frac{(L+1)!}{(2k_{2}-1)!(\frac{L}{2}+1-k_{2})!2^{\frac{L}{2}+1-k_{2}}}(2(k_{1}+k_{2})-3)!!.
\]
The sum in (\ref{eq:RHS2}) contributes when $k=k_{1}$ and $l=k_{2}-1$:

\[
\frac{K!}{(2k_{1})!(\frac{K}{2}-k_{1})!2^{\frac{K}{2}-k_{1}}}\frac{(L+1)!}{(2k_{2}-2)!(\frac{L}{2}+1-k_{2})!2^{\frac{L}{2}+1-k_{2}}}(2(k_{1}+k_{2})-3)!!.
\]
The sum in (\ref{eq:RHS3}) contributes when $k=k_{1}$ and $l=k_{2}$:

\[
\frac{K!}{(2k_{1})!(\frac{K}{2}-k_{1})!2^{\frac{K}{2}-k_{1}}}\frac{(L+1)!}{(2k_{2})!(\frac{L}{2}-k_{2})!2^{\frac{L}{2}-k_{2}}}(2(k_{1}+k_{2})-1)!!.
\]

The common factor among the three terms on the right hand side is

\[
\frac{K!}{(2k_{1}-1)!(\frac{K}{2}-k_{1})!2^{\frac{K}{2}-k_{1}}}\frac{(L+1)!}{(2k_{2}-2)!(\frac{L}{2}-k_{2})!2^{\frac{L}{2}-k_{2}}}(2(k_{1}+k_{2})-3)!!.
\]

Since this factor is contained in the term on the left hand side,
as well, we can divide both sides by it. The following term remains
on the left:

\[
\frac{1}{2k_{1}}\frac{L+2}{2k_{2}(2k_{2}-1)(L/2+1-k_{2})2}(2(k_{1}+k_{2})-1).
\]

On the right, we get 
\[
\frac{1}{(2k_{2}-1)(\frac{L}{2}+1-k_{2})2},
\]

\[
\frac{1}{2k_{1}(\frac{L}{2}+1-k_{2})2},
\]

\[
\frac{2(k_{1}+k_{2})-1}{2k_{1}2k_{2}(2k_{2}-1)}.
\]

We need to show that both sides are equal. We start by multiplying
both sides by $2k_{1}2k_{2}(2k_{2}-1)(L/2+1-k_{2})2$ and calculate

\begin{align*}
(L+2)(2(k_{1}+k_{2})-1) & \overset{?}{=}2k_{1}2k_{2}+2k_{2}(2k_{2}-1)+(2(k_{1}+k_{2})-1)(L/2+1-k_{2})2\\
(L+2)(2(k_{1}+k_{2})-1) & \overset{?}{=}2k_{1}2k_{2}+2k_{2}(2k_{2}-1)+(2(k_{1}+k_{2})-1)(L+2-2k_{2})\\
0 & \overset{?}{=}2k_{1}2k_{2}+2k_{2}(2k_{2}-1)+(2(k_{1}+k_{2})-1)(-2k_{2})\\
0 & \overset{?}{=}2k_{1}2k_{2}+4k_{2}^{2}-2k_{2}-4k_{1}k_{2}-4k_{2}^{2}+2k_{2}=0.
\end{align*}

This concludes the proof that all terms $\bar{\beta}^{k_{1}+k_{2}}\alpha_{1}^{k_{1}}\alpha_{2}^{k_{2}}$
with $k_{1}\in\{1,\ldots,K/2\}$ and $k_{2}\in\{1,\ldots,L/2\}$ have
equal coefficients on both sides of the formula. We still need to
show the same for the marginal cases where $k_{1}=0$ or $k_{2}\in\{0,L/2+1\}$.
In these five cases, on the right hand side, only one or two of the
sums in (\ref{eq:RHS1}), (\ref{eq:RHS2}) and (\ref{eq:RHS3}) contribute
to the coefficient of $\bar{\beta}^{k_{1}+k_{2}}\alpha_{1}^{k_{1}}\alpha_{2}^{k_{2}}$
. The proofs are very similar, we shall therefore limit ourselves
to the case where $k_{1}=0$ and $k_{2}=0$.

On the left hand side, we obtain the coefficient

\[
\frac{K!}{(K/2)!2^{K/2}}\frac{(L+2)!}{(L/2+1)!2^{L/2+1}}.
\]

On the right hand side, only (\ref{eq:RHS3}) contributes to the coefficient,
as in (\ref{eq:RHS1}) and (\ref{eq:RHS2}) the powers of $\alpha_{1}$
and $\alpha_{2}$ can never both be 0. Hence the coefficient on the
right is

\[
\frac{K!}{(K/2)!2^{K/2}}\frac{(L+1)!}{(L/2)!2^{L/2}}.
\]

Dividing both coefficients by $\frac{K!}{(K/2)!2^{K/2}}\frac{(L+1)!}{(L/2)!2^{L/2}}$,
we obtain 1 on the right and $\frac{L+2}{(L/2+1)2}=1$ on the left.

Since the coefficients on both sides are equal for all possible powers
of $\alpha_{1}$ and $\alpha_{2}$, we conclude that the recursive
formula holds for $M_{K,L}$ This concludes the proof of theorem \ref{CLT_hom}.

\subsubsection{Sublinear Population Growth}

In this section we shall analyse the limiting distribution of the
sums $(S_{1},S_{2})_{N}$ if one or both groups grow at lower rates
than the overall population $N$. As in the previous section, we allow
for the presence of a remainder population, i.e. $N_{1}+N_{2}\leq N$,
where equality need not hold.

Assume again that $\alpha_{1}=\lim\frac{N_{1}}{N}$ and $\alpha_{2}=\lim\frac{N_{2}}{N}$.
Now we allow one or both of these limits to be 0: let $\alpha_{1}=0$
and $0\leq\alpha_{2}\leq1$.

We shall show the following results: $S_{1}$ is asymptotically standard
normal, even though for finite $N$ the variance is of course strictly
greater than 1. In the large $N$ limit, $S_{1}$ and $S_{2}$ become
independent. Note that it suffices for this that one of the two groups
grow more slowly than at linear speed. Hence, contrary to the last
section, where we had positive moments for $K,L$ odd (such as the
covariance, e.g.), here only moments for $K,L$ even are positive.

We already know that only multiindices $\underline{i}=(i_{1},\ldots i_{K})$
with the property that each of the indices occurs only once or twice
contribute to the asymptotic moments. The moments are given by $\mathbb{E}(S_{1}^{K})=$

\begin{align*}
\approx & \frac{1}{N_{1}^{K/2}}\sum_{k=0}^{K/2}\frac{N_{1}!K!}{(2k)!(K/2-k)!(N_{1}-K/2+k)!2^{K/2-k}}(2k-1)!!\bar{\beta}^{k}N^{-k}\\
\approx & \frac{1}{N_{1}^{K/2}}\sum_{k=0}^{K/2}\frac{N_{1}^{K/2+k}K!}{(2k)!(K/2-k)!2^{K/2-k}}(2k-1)!!\bar{\beta}^{k}N^{-k}\\
\approx & \sum_{k=0}^{K/2}\frac{K!}{(2k)!(K/2-k)!2^{K/2-k}}(2k-1)!!\bar{\beta}^{k}(\frac{N_{1}}{N})^{k}\\
\approx & \sum_{k=0}^{K/2}\frac{K!}{(2k)!(K/2-k)!2^{K/2-k}}(2k-1)!!\bar{\beta}{}^{k}\alpha_{1}^{k}\\
\approx & \frac{K!}{(K/2)!2^{K/2}}\\
= & (K-1)!!
\end{align*}

The last approximate equality is due to $\alpha_{1}=0$; only the
summand with $k=0$ contributes asymptotically to the value of the
moment $M_{K}$. This shows that $S_{1}$ tends to a standard normal
distribution.

The bivariate moment $M_{K,L}=\mathbb{E}(S_{1}^{K}S_{2}^{L})$ is
similar:

\begin{align*}
 & \frac{1}{N_{1}^{K/2}}\frac{1}{N_{2}^{L/2}}\sum_{k=0}^{K/2}\sum_{l=0}^{L/2}\frac{N_{1}!K!}{(2k)!(K/2-k)!(N_{1}-K/2+k)!2^{K/2-k}}\cdot\\
 & \cdot\frac{N_{2}!L!}{(2l)!(L/2-l)!(N_{2}-L/2+l)!2^{L/2-l}}(2(k+l)-1)!!\bar{\beta}^{k+l}N^{-(k+l)}\\
\approx & \frac{1}{N_{1}^{K/2}}\frac{1}{N_{2}^{L/2}}\sum_{k=0}^{K/2}\sum_{l=0}^{L/2}\frac{N_{1}^{K/2+k}K!}{(2k)!(K/2-k)!2^{K/2-k}}\cdot\\
 & \cdot\frac{N_{2}^{L/2+l}L!}{(2l)!(L/2-l)!2^{L/2-l}}(2(k+l)-1)!!\bar{\beta}^{k+l}N^{-(k+l)}\\
\approx & \sum_{k=0}^{K/2}\sum_{l=0}^{L/2}\frac{K!}{(2k)!(K/2-k)!2^{K/2-k}}\frac{L!}{(2l)!(L/2-l)!2^{L/2-l}}\cdot\\
 & \cdot(2(k+l)-1)!!\bar{\beta}^{k+l}\alpha_{1}^{k}\alpha_{2}^{l}\\
\approx & \frac{K!}{(K/2)!2^{K/2}}\sum_{l=0}^{L/2}\frac{L!}{(2l)!(L/2-l)!2^{L/2-l}}(2l-1)!!\bar{\beta}^{l}\alpha_{2}^{l}.
\end{align*}

Depending on whether $\alpha_{2}=0$ or $\alpha_{2}>0,$ we continue
in the former case

\[
M_{K,L}=(K-1)!!(L-1)!!,
\]

which shows that for $\alpha_{2}=0$, $(S_{1},S_{2})$ follows an
independent bivariate normal distribution with both variances equal
to 1.

In the latter case, we obtain

\begin{align*}
M_{K,L} & =(K-1)!!\sum_{l=0}^{L/2}\frac{L!}{(2l)!(L/2-l)!2^{L/2-l}}(2l-1)!!\bar{\beta}^{l}\alpha_{2}^{l}\\
 & =(K-1)!!\sum_{l=0}^{L/2}\frac{L!}{(2l)!(L/2-l)!2^{L/2-l}}\frac{(2l)!}{l!2^{l}}(\alpha_{2}\bar{\beta})^{l}\\
 & =(K-1)!!\frac{L!}{(L/2)!2^{L/2}}\sum_{l=0}^{L/2}\frac{(L/2)!}{l!(L/2-l)!}(\alpha_{2}\bar{\beta})^{l}\\
 & =(K-1)!!(L-1)!!(1+\alpha_{2}\bar{\beta})^{L/2},
\end{align*}
which implies a limiting bivariate normal distribution with zero mean
and covariance matrix $\left[\begin{array}{cc}
1 & 0\\
0 & 1+\alpha_{2}\bar{\beta}
\end{array}\right]$. As mentioned previously, it is enough if one of the two groups grows
more slowly for asymptotic independence to occur.

If we inspect the formula for odd $K,L$, $M_{K,L}=$

\begin{align*}
\sum_{k=0}^{\frac{K-1}{2}}\sum_{l=0}^{\frac{L-1}{2}}\frac{K!}{(2k+1)!(\frac{K-1}{2}-k)!2^{\frac{K-1}{2}-k}}\frac{L!}{(2l+1)!(\frac{L-1}{2}-l)!2^{\frac{L-1}{2}-l}}\cdot\\
\cdot(2(k+l)+1)!!\bar{\beta}^{k+l+1}\alpha_{1}^{k+1/2}\alpha_{2}^{l+1/2},
\end{align*}

we notice that even for $k=0$, the power of $\alpha_{1}$ is strictly
positive, and therefore the moment disappears.

\subsection{Remarks on $\beta=1$}

For $\beta=1$, the limiting moments can be calculated as follows:

\begin{equation}
\frac{1}{N_{1}^{3K/4}N_{2}^{3L/4}}\sum_{k=0}^{K}\sum_{\underline{i}\in\Pi_{k}^{(K)}}\sum_{l=0}^{L}\sum_{\underline{j}\in\Pi_{l}^{(L)}}w_{K}(\underline{i})w_{L}(\underline{j})\mathbb{E}(X_{\underline{i}}Y_{\underline{j}}).\label{eq:sum-1}
\end{equation}

\begin{thm}
Let $\beta=1$. Then the expression in (\ref{eq:sum-1}) is asymptotically

\begin{equation}
12^{\frac{K+L}{4}}\frac{\Gamma(\frac{K+L+1}{4})}{\Gamma(\frac{1}{4})}\alpha_{1}^{K/4}\alpha_{2}^{L/4}.\label{eq:MomentsBeta1}
\end{equation}
\end{thm}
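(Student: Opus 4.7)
The plan is to reduce the double combinatorial sum in (\ref{eq:sum-1}) to a single leading term and identify it with a single--group critical moment. At $\beta=1$ the Curie--Weiss ensemble on $N$ spins already satisfies $S_N/N^{3/4}\Longrightarrow W$, where $W$ has density proportional to $e^{-x^{4}/12}$. The intuition for the theorem is that, by exchangeability of the measure, both partial sums $S_1=\sum_{i\le N_1}X_i$ and $S_2=\sum_{j\le N_2}Y_j$ are to leading order deterministic multiples of the global magnetization, and consequently $(S_1/N_1^{3/4},S_2/N_2^{3/4})\Longrightarrow(\alpha_1^{1/4}W,\alpha_2^{1/4}W)$. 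The target expression (\ref{eq:MomentsBeta1}) is precisely $\alpha_1^{K/4}\alpha_2^{L/4}\mathbb{E}(W^{K+L})$, so the content of the theorem is this joint moment convergence.

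The first task is a power-counting argument analogous to those bounding $A_2,A_3,A_4$ in the $\beta<1$ proof above, but with the critical $r$-point bound $|\mathbb{E}(X_{i_1}\cdots X_{i_r})|\le C_r N^{-r/4}$ for distinct indices (the key input imported from the single-group analysis in \cite{key-1}). Writing $|\underline r|=\sum_{\ell\ge 1}r_\ell$ and $r_{\mathrm{odd}}=r_1+r_3+r_5+\cdots$, a profile pair $(\underline r,\underline s)$ contributes at most $C\,N_1^{|\underline r|-3K/4}N_2^{|\underline s|-3L/4}N^{-(r_{\mathrm{odd}}+s_{\mathrm{odd}})/4}$; using $N_1,N_2\le N$ to absorb the $N$-power into the group exponents and then the constraint $K=\sum_\ell \ell r_\ell$ term by term yields the identity
\[
\tfrac34 K-|\underline r|+\tfrac14 r_{\mathrm{odd}}=\sum_{\ell\ge 2}c_\ell\, r_\ell,
\]
with $c_\ell=3(\ell-1)/4$ for odd $\ell\ge 3$ and $c_\ell=(3\ell-4)/4$ for even $\ell\ge 2$, both strictly positive. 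Hence only the profile $\underline r=(K,0,\ldots,0)$ on the $X$-side (and $\underline s=(L,0,\ldots,0)$ on the $Y$-side) contributes to the limit.

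For this surviving profile, $X(\underline r)\,Y(\underline s)$ is a product of $K+L$ distinct spins of the full ensemble, so exchangeability of the Curie--Weiss measure gives $\mathbb{E}(X(\underline r)Y(\underline s))=\mathbb{E}(X_1X_2\cdots X_{K+L})$. The asymptotic
\[
\mathbb{E}(X_1\cdots X_{K+L})\sim 12^{(K+L)/4}\,\frac{\Gamma(\tfrac{K+L+1}{4})}{\Gamma(\tfrac14)}\,N^{-(K+L)/4}
\]
(for $K+L$ even; zero otherwise by the spin-flip symmetry of the measure) follows from $\mathbb{E}(S_N^{K+L})\sim N^{K+L}\mathbb{E}(X_1\cdots X_{K+L})$ together with $N^{-3(K+L)/4}\mathbb{E}(S_N^{K+L})\to\mathbb{E}(W^{K+L})$, the latter moment being evaluated via the substitution $u=x^{4}/12$. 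Combining with $w_K((K,0,\ldots,0))\sim N_1^K$, $w_L((L,0,\ldots,0))\sim N_2^L$ and $N_\nu/N\to\alpha_\nu$ produces the value $12^{(K+L)/4}\Gamma(\tfrac{K+L+1}{4})\alpha_1^{K/4}\alpha_2^{L/4}/\Gamma(\tfrac14)$ claimed in (\ref{eq:MomentsBeta1}). The main obstacle is the uniform critical correlation bound $|\mathbb{E}(X_{i_1}\cdots X_{i_r})|\le C_r N^{-r/4}$; once that is in hand from \cite{key-1}, everything else is the same bookkeeping used for the $\beta<1$ CLT.
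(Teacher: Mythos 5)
Your proposal is correct and follows essentially the same route as the paper: a power-counting bound using the critical correlation decay $N^{-\#/4}$ to kill every profile with a repeated index, followed by evaluation of the surviving all-distinct term via the single-group asymptotics $\mathbb{E}(X_1\cdots X_{K+L})\sim 12^{(K+L)/4}\Gamma(\tfrac{K+L+1}{4})\Gamma(\tfrac14)^{-1}N^{-(K+L)/4}$ from \cite{key-1}. Your exponent bookkeeping (tracking $r_{\mathrm{odd}}$ rather than $r_1$ and making the deficit coefficients $c_\ell$ explicit) is a slightly sharper version of the paper's bound $w_K(\underline{i})w_L(\underline{j})\le cN_1^{(k+K)/2}N_2^{(l+L)/2}$, but it is the same argument.
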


\begin{proof}
We calculate for any $\underline{i}\in\prod_{k}^{(K)},\underline{j}\in\prod_{l}^{(L)}$:

\begin{align*}
\left|\mathbb{E}(X(\underline{i},\underline{j}))\right| & \leq c\frac{1}{N^{\frac{k+l}{4}}},\\
w_{K}(\underline{i})w_{L}(\underline{j}) & \leq cN_{1}^{\frac{k+K}{2}}N_{2}^{\frac{l+L}{2}}.
\end{align*}

The symbol $c$ in the above inequalities stands for some constant
(not necessarily the same in both lines).

Therefore, each summand is bounded above by

\begin{align*}
\frac{1}{N_{1}^{3K/4}N_{2}^{3L/4}}w_{K}(\underline{i})w_{L}(\underline{j})\left|\mathbb{E}(X(\underline{i},\underline{j}))\right| & \leq c\frac{1}{N_{1}^{3K/4}N_{2}^{3L/4}}N_{1}^{\frac{k+K}{2}}N_{2}^{\frac{l+L}{2}}\frac{1}{N^{\frac{k+l}{4}}}\\
 & =c\alpha_{1}^{1/4}\alpha_{2}^{1/4}N_{1}^{-\frac{1}{4}(K-k)}N_{2}^{-\frac{1}{4}(L-l)},
\end{align*}

which goes to 0 as $N\rightarrow\infty$ if $K>k$ or $L>l$.

Just as in the one-dimensional case, the only summand that matters
asymptotically is the one where both $k=K,l=L$ hold. In this case,
we have

\begin{align*}
w_{K}(\underline{i}) & \approx N_{1}^{K},\\
w_{L}(\underline{j}) & \approx N_{2}^{L}
\end{align*}

and

\[
\mathbb{E}(X(\underline{i},\underline{j}))\approx12^{\frac{K+L}{4}}\frac{\Gamma(\frac{K+L+1}{4})}{\Gamma(\frac{1}{4})}\frac{1}{N^{\frac{K+L}{4}}}
\]

provided that $K+L$ is even and 0 otherwise. Multiplying these, we
obtain

\[
\frac{1}{N_{1}^{3K/4}N_{2}^{3L/4}}w_{K}(\underline{i})w_{L}(\underline{j})\mathbb{E}(X(\underline{i},\underline{j}))\approx12^{\frac{K+L}{4}}\frac{\Gamma(\frac{K+L+1}{4})}{\Gamma(\frac{1}{4})}\alpha_{1}^{K/4}\alpha_{2}^{L/4}.
\]
\end{proof}
This law of large numbers also implies that the central limit theorem
cannot hold for $\beta\geq1$. However, for $\beta=1$, there is a
bounded measure $\mu$ with the moments given by (\ref{eq:MomentsBeta1}).

email: werner.kirsch@fernuni-hagen.de\qquad{}gabor.toth@fernuni-hagen.de
\end{document}